\documentclass[12pt,a4paper]{article}
\usepackage{latexsym,amsmath,amssymb}
\date{}
\numberwithin{equation}{section}

\DeclareMathOperator{\co}{co}

\DeclareMathOperator{\card}{card}

\DeclareMathOperator{\inter}{int}

\newtheorem{theo}{Theorem}
\newtheorem{lem}{Lemma}
\newtheorem{cor}{Corollary}

\newenvironment{proof}{{\bf Proof.}}{\hfill$\blacksquare$\\}

\begin{document}

\title{\bf Family of closed convex sets covering faces of the simplex}
\author{\large Horst Kramer, A. B. N\'emeth}
%\small \it Nesselweg 39, 65527 Niedernhausen, Germany\\
%\small \it e-mail: f.h.kramer@gmail.com,
%nemab@math.ubbcluj.ro}
\maketitle
\begin{abstract} Let $S=\co \{a_1,a_2,...,a_{n+1}\}$ be the simplex spanned
by the independent points $a_i\in \mathbb R^n,\,i =1,2,...,n+1,$  and
denote by $S^i$ its face $\co \{a_1,...,a_{i-1},a_{i+1},...,a_{n+1}\}.$
If a family of $n+1$ closed convex sets\\
$A^1,\,A^2,...,A^{n+1}$ in $S$ has the property $S^i\subset
A^i$ for each $i$, then there exists a point $v$ in $S$ which is at the
distance $\varepsilon_0\geq 0$ to every set $A^i.$ If $\varepsilon_0>0,$ then the point $v$
with this property is unique.
\end{abstract}
\section{Introduction}

There is classical literature of the combinatorial and algebraic topology
considering the problem of the covering of a simplex. As the first and most
famous result we mention Sperner's Lemma \cite{SP}. A dual result to the
Lemma of Sperner is the Knaster-Kuratowski-Mazurkiewicz Theorem \cite{KKM}.
These results give raise to extensive investigations and applications, among
which we mention the results in \cite{FA} and \cite{SH}. Some
covering problems with closed convex sets benefit substantially from the
above mentioned issues, as is reflected in \cite{BE}, \cite{GH} and
\cite{FA}. As far as we know there was no attempt in a more direct, more
geometric approach in the convex case, although this special context allows
to obtain specific results. In our recent paper \cite{KN} on families of
convex sets we also follow the line of using essentially the classical
covering theorems with closed sets, but as we shall show, a method developed
there allows to prove an extended convex variant of Sperner's lemma without
combinatorial reasonings. In this note we are doing this. Thus all the
results concerning convex sets in \cite{BE}, \cite{GH}, \cite{FA} and
\cite{KN} can be obtained by using our Theorem 1 in place of Sperner's lemma.

Our note continues the line of our erlier investigations in
\cite{KN1}, \cite{KN2}, \cite{KN3} and \cite{KH} about the existence of equally spaced points
to some families of compact and convex sets in the Euclidean and Minkowski
spaces.

\section{Main results}

Let $S=\co \{a_1,a_2,...,a_{n+1}\}$ be the simplex spanned
by the independent points $a_i\in \mathbb R^n,\,i\in N =\{1,2,...,n+1\},$  and
denote by $S^i$ its face $\co \{a_1,...,a_{i-1},\\a_{i+1},...,a_{n+1}\}.$
Our main result is as follows:

\begin{theo}\label{main}
If a family of $n+1$ closed convex sets
$A^1,\,A^2,...,A^{n+1}$ in $S$ has the property $S^i\subset
A^i$ for each $i$, then there exists a point $v$ in $S$ and an $\varepsilon_0\geq 0$ such that $v$ is at the
distance $\varepsilon_0$ to every set $A^i.$ If $\varepsilon_0>0,$ then the point $v$
with this property is unique.
\end{theo}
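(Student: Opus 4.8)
The plan is to work with the distance functions $d_i(x)=d(x,A^i)=\min_{y\in A^i}\|x-y\|$, which are convex and continuous because each $A^i$ is closed and convex, and to reformulate the conclusion as the assertion that all the $d_i$ take a common value $\varepsilon_0$ at some point $v\in S$. For existence I would use the facet structure together with a Knaster--Kuratowski--Mazurkiewicz covering argument (this is the place where the paper substitutes its convex variant of Sperner's lemma for the classical one). Put $B^i=\{x\in S:\ d_i(x)=\max_l d_l(x)\}$; each $B^i$ is closed since the $d_l$ are continuous. The key point is the covering condition: if $x\in\co\{a_i:i\in I\}$ and $l\notin I$, then $\co\{a_i:i\in I\}\subseteq S^l\subseteq A^l$, so $d_l(x)=0$; hence $\max_l d_l(x)$ is attained at some index of $I$, i.e. $\co\{a_i:i\in I\}\subseteq\bigcup_{i\in I}B^i$ for every $I\subseteq N$. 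The covering theorem then gives $v\in\bigcap_i B^i$, and at such $v$ one has $d_1(v)=\cdots=d_{n+1}(v)=:\varepsilon_0\ge 0$, the desired equidistant point.

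For uniqueness I would argue purely by convexity along a line. Assume $\varepsilon_0>0$ and suppose a second equidistant point $w\neq v$ exists, with common distance $\beta\ge 0$. Since $d_i(v)=\varepsilon_0>0$ forces $v\notin A^i\supseteq S^i$ for every $i$, the point $v$ lies in $\inter S$. Extend the segment from $w$ through $v$ until it leaves $S$; it meets $\partial S$ beyond $v$ at a point $P$ lying on some facet $S^j$, so $d_j(P)=0$, and $v=(1-\lambda)w+\lambda P$ with $\lambda\in(0,1)$. Convexity of $d_j$ together with $d_j(w)=\beta$, $d_j(v)=\varepsilon_0$ gives
\[
\varepsilon_0=d_j(v)\le (1-\lambda)\,d_j(w)+\lambda\,d_j(P)=(1-\lambda)\beta .
\]
If $\beta=0$ this already yields $\varepsilon_0\le 0$, a contradiction; if $\beta>0$ it yields $\varepsilon_0<\beta$. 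Interchanging the roles of $v$ and $w$ is legitimate when $\beta>0$, since then $w$ is interior as well, and it produces $\beta<\varepsilon_0$. The two strict inequalities are incompatible, so $w=v$.

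The step I expect to be the real obstacle is existence, not uniqueness: the uniqueness argument above is short and needs only that $v$ is interior and that $\partial S$ is covered by the facets $S^j$ on which $d_j$ vanishes. Existence, by contrast, necessarily rests on some fixed-point or covering ingredient, and the genuine difficulty addressed by the paper is to obtain it \emph{without} combinatorial reasoning, replacing the KKM/Sperner input by the convexity method of \cite{KN}. Accordingly I would first secure existence cleanly through the $B^i$ covering argument as above, and then, to stay within the paper's program, re-derive exactly that covering conclusion from the convex machinery rather than from Sperner's lemma; the value $\varepsilon_0$ is automatically nonnegative and, by the uniqueness step, the equidistant point is single-valued precisely when $\varepsilon_0>0$.
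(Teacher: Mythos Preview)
Your argument is correct, but the route differs from the paper's in both halves. For existence you invoke the KKM theorem via $B^i=\{x\in S:\ d_i(x)=\max_l d_l(x)\}$; this is precisely the combinatorial input the paper sets out to avoid. The paper instead splits into two cases. If the $A^i$ do \emph{not} cover $S$ (an ``$\mathcal H$-family''), Lemma~\ref{kn} lets $\varepsilon_0=\inf\{\varepsilon:\ \bigcap_i A^i_\varepsilon\neq\emptyset\}$ where $A^i_\varepsilon$ is the $\varepsilon$-hull of $A^i$ in $S$, and shows by pure compactness/convexity that $\bigcap_i A^i_{\varepsilon_0}$ is a single point at distance exactly $\varepsilon_0>0$ from every $A^i$. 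If the $A^i$ \emph{do} cover $S$, Lemma~\ref{n} runs the homotopy $C^i_t=(1-t)S^i+tA^i$, uses Lemma~\ref{kn} on the $\mathcal H$-families obtained for small $t$, and lets $t\uparrow t_0$ to produce a common point of the $A^i$ (so $\varepsilon_0=0$). Your uniqueness argument, using convexity of $d_j$ along the segment from $w$ through $v$ to a facet point, is actually cleaner than the paper's, which first shows $B_{\varepsilon_0}$ is a singleton via an explicit right-triangle construction with a supporting hyperplane, and then argues that any other point lies inside some $A^k_{\varepsilon_0}$ and outside some $A^j_{\varepsilon_0}$.

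What each buys: your proof is short and conceptual but leans on KKM, so it makes the theorem a \emph{consequence} of Sperner-type results rather than a \emph{replacement} for them, which is the paper's declared aim. The paper's approach is self-contained within convex geometry and delivers extra information along the way --- the variational formula $\varepsilon_0=\sup_{u\in S}\min_i d(u,A^i)$ and the monotonicity of $\varepsilon_0$ under enlarging the $A^i$. You correctly identify this program in your last paragraph but do not carry it out; the substantive work needed to match the paper is exactly the content of Lemmas~\ref{kn} and~\ref{n}.
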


A family $\{B^\alpha:\;\alpha\in I\}$ is said to be a {\it face covering for $S$} if
each $B^\alpha$ contains some face $S^i$.

The unicity of the equally spaced point in the first part of Theorem \ref{main}
concludes

\begin{cor}\label{dist}

If $\{B^\alpha:\;\alpha\in I,\}\;\card I\geq n+2$ is a face covering family 
of closed convex sets in $S$ such that every $S^i,\; i=1,2,...,n+1$ 
is contained in some of its sets and each subfamily
with $n+2$ members possesses an equally spaced point in $S$ of positive
distance, then the whole family possesses such a point.

\end{cor}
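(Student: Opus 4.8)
The plan is to reduce the whole family to a single application of the uniqueness assertion in Theorem \ref{main}. First I would fix a \emph{core} of $n+1$ indices. Since each face $S^i$, $i=1,\dots,n+1$, is contained in some member of the family, I can choose indices $\beta_1,\dots,\beta_{n+1}\in I$ (not necessarily distinct) with $S^i\subset B^{\beta_i}$. Writing $A^i:=B^{\beta_i}$, this core satisfies exactly the hypotheses of Theorem \ref{main}: the sets $A^1,\dots,A^{n+1}$ are closed convex subsets of $S$ with $S^i\subset A^i$. In particular, any equally spaced point of the core of positive distance is \emph{unique}.

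Next, for an arbitrary index $\gamma\in I$ I would enlarge the core to a subfamily of exactly $n+2$ members. Because the core uses at most $n+1$ indices and $\card I\ge n+2$, I can adjoin $\gamma$ and, if necessary, further indices of $I$ to form an index set $J_\gamma\subseteq I$ with $\card J_\gamma=n+2$ that contains $\beta_1,\dots,\beta_{n+1}$ and $\gamma$. The standing hypothesis applied to the $(n+2)$-member subfamily $\{B^\alpha:\alpha\in J_\gamma\}$ then yields an equally spaced point $v_\gamma\in S$ at a common positive distance $\varepsilon_\gamma>0$ from all its members; in particular $d(v_\gamma,A^i)=\varepsilon_\gamma$ for every $i$ and $d(v_\gamma,B^\gamma)=\varepsilon_\gamma$.

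The crux is now the uniqueness. Each $v_\gamma$ is an equally spaced point of positive distance for the core $A^1,\dots,A^{n+1}$, so by the uniqueness clause of Theorem \ref{main} they must all coincide: there is a single point $v^\ast\in S$ and a single value $\varepsilon^\ast>0$ with $v_\gamma=v^\ast$ and $\varepsilon_\gamma=\varepsilon^\ast$ for every $\gamma\in I$. Consequently $d(v^\ast,B^\gamma)=\varepsilon_\gamma=\varepsilon^\ast$ holds for every $\gamma$, which is precisely the assertion that $v^\ast$ is an equally spaced point of positive distance for the entire family.

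I expect the main obstacle to be formal rather than conceptual: one must ensure that the reduction to $n+2$ members remains legitimate when a single $B^\alpha$ happens to contain several faces $S^i$, so that the core collapses to fewer than $n+1$ distinct sets. This is handled by allowing repetitions among the indices $\beta_i$ (Theorem \ref{main} does not require the $A^i$ to be distinct) and by padding $J_\gamma$ with arbitrary additional indices of $I$, which is possible thanks to $\card I\ge n+2$. Once this bookkeeping is settled, the uniqueness in Theorem \ref{main} does all the real work.
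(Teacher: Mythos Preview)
Your proposal is correct and follows essentially the same route as the paper: select a core $\{B^{\beta_1},\dots,B^{\beta_{n+1}}\}$ with $S^i\subset B^{\beta_i}$, adjoin an arbitrary further $B^\gamma$ to form an $(n+2)$-subfamily, and use the uniqueness clause of Theorem~\ref{main} on the core to force all the resulting equally spaced points to coincide. Your version is in fact slightly more careful than the paper's about the bookkeeping when the $\beta_i$ are not all distinct.
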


\begin{cor}\label{hell}
The  face covering family $\mathcal B= \{B^\alpha:\;\alpha\in I\}$
of closed convex sets in $S$ has a nonempty intersection if and only if every subfamily
$B^{\alpha_1},B^{\alpha_2},...,B^{\alpha_{n+1}}$  of it, which
covers the boundary of $S$, covers also $S$.
\end{cor}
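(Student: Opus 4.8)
The plan is to treat the two implications separately, using Theorem~\ref{main} as the engine for the less obvious direction, and note that throughout I work in the meaningful case (as in Corollary~\ref{dist}) where every face $S^i$ is contained in at least one member, so that the boundary is coverable at all.

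For the direct implication I would start from a common point $v\in\bigcap_{\alpha\in I}B^\alpha$ together with a subfamily $B^{\alpha_1},\dots,B^{\alpha_{n+1}}$ covering the boundary $\partial S=\bigcup_i S^i$. Given an arbitrary $x\in S$, I would follow the segment issuing from $v$ through $x$ until it leaves $S$; since $S$ is compact and convex this produces a boundary point $p\in\partial S$ with $x\in[v,p]$. As the subfamily covers $\partial S$, some $B^{\alpha_j}$ contains $p$; it also contains $v$, and being convex it contains the whole segment $[v,p]$, hence $x$. Thus the subfamily covers every $x\in S$. This settles the necessity of the condition and is the routine half.

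For the converse I would reorganise the family by faces. For each $i$ set $\mathcal B_i=\{B^\alpha:\,S^i\subseteq B^\alpha\}$ and $C^i=\bigcap_{B\in\mathcal B_i}B$. Each $C^i$ is a closed convex subset of $S$ with $S^i\subseteq C^i$, and since every member contains some face one has $\bigcap_{\alpha}B^\alpha=\bigcap_{i=1}^{n+1}C^i$. Now I would apply Theorem~\ref{main} to the family $C^1,\dots,C^{n+1}$, obtaining a point $v\in S$ equidistant, at distance $\varepsilon_0\ge 0$, from all the $C^i$. If $\varepsilon_0=0$ then $v\in C^i$ for every $i$ (the $C^i$ being closed), so $v\in\bigcap_i C^i=\bigcap_\alpha B^\alpha$ and the intersection is nonempty, as wanted.

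It remains to rule out $\varepsilon_0>0$, and this is the step I expect to carry the weight. If $\varepsilon_0>0$ then $v\notin C^i$ for each $i$, so for every $i$ I may choose a member $B_i\in\mathcal B_i$ with $v\notin B_i$. The chosen family $B_1,\dots,B_{n+1}$ covers $\partial S=\bigcup_i S^i$, because $S^i\subseteq B_i$; yet $v\in S$ while $v\notin\bigcup_i B_i$, so this family does \emph{not} cover $S$. This contradicts the hypothesis that every $(n+1)$-subfamily covering the boundary covers all of $S$, forcing $\varepsilon_0=0$. The essential point, and the only place where real work is hidden, is that Theorem~\ref{main} supplies a single point $v$ lying off \emph{all} the sets $C^i$ simultaneously: the bare equality $\bigcap_i C^i=\emptyset$ would not by itself guarantee such a common ``escape'' point, and it is exactly the equidistant point of the theorem that provides it. Note that only the existence half of Theorem~\ref{main} is needed here, not its uniqueness.
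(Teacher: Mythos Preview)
Your argument is correct and takes a genuinely different route from the paper's. The paper proves the converse by verifying that \emph{every} $(n+1)$-subfamily of $\mathcal B$ has nonempty intersection (splitting into cases according to whether the subfamily covers $\partial S$, and whether some member contains two maximal faces) and then invoking Helly's theorem; Theorem~\ref{main} enters only in the subcase where the subfamily is already of the form $S^i\subset B^{\alpha_i}$ and covers $S$. You instead form the face-wise intersections $C^i=\bigcap\{B^\alpha: S^i\subset B^\alpha\}$, apply Theorem~\ref{main} once to $C^1,\dots,C^{n+1}$, read off the conclusion directly when $\varepsilon_0=0$, and manufacture a single contradicting $(n+1)$-subfamily when $\varepsilon_0>0$. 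Your approach is shorter and avoids Helly's theorem altogether; the paper's approach yields, as a byproduct, the stronger intermediate fact that all $(n+1)$-subfamilies of $\mathcal B$ have a common point. Two small points deserve a word: first, the degenerate situation in which some face $S^i$ lies in no $B^\alpha$ (so your $\mathcal B_i$ is empty) should be disposed of explicitly---there every $B^\alpha$ contains the vertex $a_i$, so $\bigcap_\alpha B^\alpha\ni a_i$ is trivially nonempty and the hypothesis on boundary-covering subfamilies is vacuous; second, the sets $B_1,\dots,B_{n+1}$ you select when $\varepsilon_0>0$ are automatically pairwise distinct, since a convex subset of $S$ containing two maximal faces equals $S$ and could not omit $v$, so they really do form an $(n+1)$-subfamily to which the hypothesis applies.
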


\section{The proofs}

Let us consider $N=\{1,2,...,n+1\}$ and a family $\mathcal H
=\{A^1,A^2,...,A^{n+1}\} $ of closed convex sets in $S$. Then
$\mathcal H$ will be called an $\mathcal H$-{\it family}, if  $S^i\subset A^i,\;i\in N$
and $S\setminus \cup _{i\in N} A^i \not= \emptyset.$

We prove Theorem \ref{main} by using two auxiliary lemmas. The first of them is

\begin{lem}\label{kn}
Consider the $\mathcal H$-family $\{A^1,A^2,...,A^{n+1}\}$.
Let $A^i_\varepsilon$ be the $\varepsilon >0$-hull of the set $A^i$ in $S$, i. e., the set of
points in $S$ with the distance $\leq \varepsilon$ from the set $A^i$. Then

\begin{enumerate}
\item There exists an $\varepsilon_0>0$ such that:

(i) $\{A^i_\varepsilon :\, i\in N\}$ is a $\mathcal H$-family for $\varepsilon <\varepsilon_0$,

(ii) $B_\varepsilon =\cap_{i\in N}A^i_\varepsilon \not= \emptyset$ for $\varepsilon
\geq \varepsilon_0$.

\item $B_{\varepsilon _0}$ reduces to a single point $v$, and $\varepsilon_0$ is the
common distance of $v$ to $A^i$.
\item $v$ is the single point which is equally spaced to the sets $A^i$.
\item There holds the relation
$$\varepsilon_0=\sup_{u\in S} \inf_{i\in N} d(u,A^i),$$
where $d(a,A)$ denotes the distance of the point $a$ from the set $A$.
\item If $\{D^1,D^2,...,D^{n+1}\}$ is another $\mathcal H$-family
with $A^i\subset D^i,\;i=1,2,...,n+1$ and $\varepsilon_1$ is the common
distance of the equally spaced point from $D^i$, then $\varepsilon_1\leq \varepsilon_0.$
\end{enumerate}
\end{lem}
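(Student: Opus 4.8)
The plan is to recast the whole statement as a min--max problem on the compact set $S$. For $u\in S$ put $\rho(u)=\min_{i\in N}d(u,A^i)=d\!\left(u,\bigcup_iA^i\right)$ and $\phi(u)=\max_{i\in N}d(u,A^i)$, both continuous. Straight from the definition of the $\varepsilon$--hulls one has $u\in B_\varepsilon\iff\phi(u)\le\varepsilon$, and $\bigcup_iA^i_\varepsilon=\{u\in S:\rho(u)\le\varepsilon\}$, so the hulls cover $S$ exactly when $\varepsilon\ge\max_{u\in S}\rho(u)$. I would therefore \emph{define} $\varepsilon_0=\max_{u\in S}\rho(u)=\max_{u\in S}\min_{i\in N}d(u,A^i)$, which is already formula (4), and note $\varepsilon_0>0$ because an $\mathcal H$--family leaves an open hole in $S$ on which $\rho>0$. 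Then (1)(i) is immediate: for $\varepsilon<\varepsilon_0$ some $u$ has $\rho(u)>\varepsilon$, so $\{A^i_\varepsilon\}$ does not cover $S$ and is again an $\mathcal H$--family. For (1)(ii) I invoke the convex covering dichotomy --- \emph{a family of closed convex sets, each containing a face $S^i$, covers $S$ if and only if it has a common point} --- which is the convex (KKM--type) fact behind the cited literature: for $\varepsilon\ge\varepsilon_0$ the hulls $A^i_\varepsilon$ cover $S$, hence meet, i.e. $B_\varepsilon\neq\emptyset$. Applied at every level this dichotomy says $\rho(u)\le\varepsilon$ on $S$ iff $B_\varepsilon\neq\emptyset$, that is
\[
\min_{u\in S}\max_{i\in N}d(u,A^i)=\max_{u\in S}\min_{i\in N}d(u,A^i)=\varepsilon_0 .
\]

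For (2) and (3) I would use that $\phi$ is convex (a maximum of distances to convex sets). By the displayed identity $\min_S\phi=\varepsilon_0$, so $B_{\varepsilon_0}=\{\phi\le\varepsilon_0\}=\operatorname{argmin}\phi$, a convex set. Fix $v\in B_{\varepsilon_0}$ and let $I=\{i:d(v,A^i)=\varepsilon_0\}$; for $i\in I$ one has $v\notin A^i$, and $\phi$ is differentiable in that term with unit gradient $u_i=(v-P_i(v))/\varepsilon_0$, where $P_i(v)$ is the nearest point of $A^i$ to $v$. I would establish the sharp claim that \emph{every} index is active ($I=N$) and that $0$ lies in the \emph{interior} of $\co\{u_i:i\in N\}$. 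Granting this, the one--sided derivative $\phi'(v;h)=\max_i\langle u_i,h\rangle$ is strictly positive for every $h\neq0$, so $v$ is a strict minimizer of the convex function $\phi$; hence $B_{\varepsilon_0}$ reduces to the single point $v$, and $I=N$ is exactly $d(v,A^i)=\varepsilon_0$ for each $i$, the equidistance asserted in (2). Part (3) then follows formally: any equally spaced point $w$ with common distance $c$ satisfies $\phi(w)=\rho(w)=c$, whence $\varepsilon_0=\min_S\phi\le c\le\max_S\rho=\varepsilon_0$ forces $c=\varepsilon_0$ and $w\in B_{\varepsilon_0}=\{v\}$.

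The main obstacle is precisely the sharp claim just used: at the minimizer all $n+1$ constraints are active and their escape directions $u_i$ surround the origin. This is where the hypothesis $S^i\subset A^i$ must be spent. The geometric content is that each $A^i$ lies on the inner side of the hyperplane carrying the facet $S^i$ and touches it, so the direction $u_i$ in which one escapes $A^i$ is tied to the inner unit normal $\eta_i$ of $S^i$; and the inner facet normals of a simplex are the model case of vectors positively surrounding the origin, since $\sum_i\operatorname{vol}(S^i)\,\eta_i=0$ with strictly positive weights, so $0\in\inter\co\{\eta_i:i\in N\}$. Converting ``$u_i$ is controlled by $\eta_i$'' into ``$0\in\inter\co\{u_i:i\in N\}$'' is the delicate step; the clean way to package it is a first--order (normal--cone) analysis of the constrained minimum of $\phi$ over $S$, which simultaneously yields $v\in\inter S$ (consistent with $d(v,A^j)=\varepsilon_0>0$, so $v\notin A^j\supset S^j$), or else to invoke the corresponding result of \cite{KN}, the ``method developed there'' advertised in the introduction.

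Part (5) needs no new idea. If $A^i\subset D^i$ then $d(u,D^i)\le d(u,A^i)$ for every $u$, hence $\min_{i}d(u,D^i)\le\min_{i}d(u,A^i)$ pointwise; taking the maximum over $u\in S$ and using formula (4) for both families gives $\varepsilon_1=\max_{u}\min_{i}d(u,D^i)\le\max_{u}\min_{i}d(u,A^i)=\varepsilon_0$.
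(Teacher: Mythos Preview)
Your min--max recasting is attractive, but the step you call the ``convex covering dichotomy'' is precisely what the paper is building towards. The statement ``a family of closed convex sets with $S^i\subset A^i$ covers $S$ iff it has a common point'' is Theorem~\ref{main} in the covering case (common distance $0$), and Lemma~\ref{kn} is the engine the paper uses to prove that theorem via Lemma~\ref{n}. Invoking the dichotomy to obtain (1)(ii) and the identity $\min_S\phi=\max_S\rho$ is therefore circular inside this paper, and as an external input it is exactly the KKM--type result the introduction announces it will bypass. The paper avoids this by taking the \emph{other} definition, $\varepsilon_0=\inf\{\varepsilon:B_\varepsilon\neq\emptyset\}=\min_S\phi$: then (1)(ii) is automatic from compactness of the nested $B_\varepsilon$, and (1)(i) is obtained only \emph{after} equidistance, since $d(v,A^i)=\varepsilon_0$ for all $i$ makes $v$ a witness that the $\varepsilon$-hulls fail to cover when $\varepsilon<\varepsilon_0$.

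The ``sharp claim'' you isolate for (2)--(3) is a second genuine gap, and your heuristic for it does not hold. The set $A^i$ is only required to contain $S^i$; it may protrude deep into $S$, so the nearest point $P_i(v)$ can sit on any part of $\partial A^i$ and the escape direction $u_i=(v-P_i(v))/\varepsilon_0$ need not be close to the inner facet normal $\eta_i$. Nor does $I=N$ follow from the minimax identity alone. The paper settles both issues by elementary, self-contained arguments. For $I=N$: the crucial observation is $a_i\in\bigcap_{j\neq i}A^j$ (because $a_i\in S^j\subset A^j$ for $j\neq i$), so $\bigcap_{j\neq i}A^j_{\varepsilon_0}$ has nonempty interior; if some $b\in B_{\varepsilon_0}$ lay in $\inter A^i_{\varepsilon_0}$, a nearby point would be interior to \emph{every} hull, contradicting the minimality of $\varepsilon_0$. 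For the singleton property: if $b_1\neq b_2\in B_{\varepsilon_0}$, the line through them meets $\partial S\subset\bigcup_iA^i$ at some $a\in A^i$; working in the $2$-plane through $a$, $b_2$ and the foot $c=P_i(b_2)$, the supporting hyperplane to $A^i$ at $c$ forces $d(b_1,c)<d(b_2,c)=\varepsilon_0$, a contradiction. Your treatment of parts~4 and~5 is fine once 1--3 are in place, and matches the paper's monotonicity argument.
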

\begin{proof}

Our proof consists in fact from gathering some considerations in the proofs in
\cite{KN}.

Let $A^i_\varepsilon$ be the closed $\varepsilon \geq 0$-hull of the set $A^i$ in $S$, i. e., the
set of points in $S$ with the distance $\leq \varepsilon$ from the set $A^i$.
Denote $B_\varepsilon =\cap_{i\in N}A^i_\varepsilon$. Then
$$\Omega = \{\varepsilon :\;B_\varepsilon \not= \emptyset\}$$ is obviously not empty
and $\varepsilon_0=\inf \Omega$ is well defined. Indeed, since $B_\varepsilon,\; \varepsilon \in \Omega$
are nonempty compact convex sets, we have that
$$B_{\varepsilon_0}=\cap_{\varepsilon \in \Omega} B_\varepsilon $$
is a nonempty compact convex set.

Let us suppose $\varepsilon_0=0$. Then $B_0=\cap_{i\in N} A^i \not= \emptyset$ and for
$v\in B_0$ we would have $\co \{S^i,v\}\subset A^i,\;i\in N$
the later implying that $\cup_{i\in N}\co \{S^i,v\}$ covers $S$ and therefore
also $\cup_{i\in N}A^i$ covers S, which is a contradiction.  Thus $\varepsilon_0>0.$

We shall show first that no point of $B_{\varepsilon_0}$ can be an interior point of
some $A^i_{\varepsilon_0}$. Assuming the contrary, e.g. that $b\in B_{\varepsilon_0}\cap
\inter A^i_{\varepsilon_0}$ we have first of all that $d(b,A^i)<\varepsilon_0$ and
$d(b,A^j)\leq \varepsilon_0, \;j\in N.$ Since $\cap_{j\in N\setminus \{i\}}A^j$ is nonempty,
$\varepsilon_0>0$, the set $\cap_{j\in N\setminus \{i\}}A^j_{\varepsilon_0}$ is convex
and has a nonempty interior. Now, $b\in \cap_{j\in N\setminus \{i\}}A^j_{\varepsilon_0}$ and each of
its neighborhoods contains interior points of $\cap_{j\in N\setminus \{i\}}A^j_{\varepsilon_0}$.
Hence so does $\inter A^i_{\varepsilon_0}$. Let be $x$ a such point. Then
$d(x,A^j)<\varepsilon_0, j\in N$. Denote by $\delta = \sup \{d(x,A^j):\,j\in N\}.$
It follows that $x\in B_\delta$ with $\delta <\varepsilon_0$, in contradiction with
the definition of $\varepsilon_0$.

Thus $B_{\varepsilon_0}$ is on the boundary of every $A^i_{\varepsilon_0}$. Hence:
\begin{center}$d(b,A^j)=\varepsilon_0,\;\forall \,j\in N\;\forall \, b\in B_{\varepsilon_0}$.\end{center}

If $B_{\varepsilon_0}$ would contain two distinct points, $b_1$ and $b_2$, the
line segment determined by these two points would be in this set too.

The line determined by these points should meet the boundary of $S $
which is in $\cup_{j\in N}A^j$. Thus the line would meet some set $A^i$ in a
point $a$. Suppose that $b_1$ is between $a$ and $b_2$. Let $c$ be the point
in $A^i$ at distance $\varepsilon_0$ from $b_2$. Consider the plane of dimension two
determined by the line $cb_2$ and the line $b_1b_2$. This plane meets the
supporting hyperplane to $A^i$ at $c$ and perpendicular on $cb_2$ in a line
$\lambda$
 which is perpendicular
to $cb_2$. Now, $a$ must be behind the supporting hyperplane, hence the line
$b_2b_1$ meets the line $\lambda$ in a point $d$ between $a$ and $b_2$. Thus
the triangle $dcb_2$ is rectangular at $c$. Since $B_{\varepsilon_0}$ is convex, we
can suppose without loss of generality that $b_1$ is on the segment $fb_2$,
where $f$ is the base of the perpendicular from $c$ to $b_1b_2$. But then the
distance from $b_1$ to $c$ is less then the distance of $b_2$ to $c$ which is
$\varepsilon_0$. This contradiction shows that $B_{\varepsilon_0}$ reduces to a point.

Let us observe now, that $\cup_{i\in N}A^i_{\varepsilon_0}=S$ since
for $v\in B_{\varepsilon_0}$  the simplexes $\co \{v,a_j:\;j\in N\setminus \{i\}\}
\subset A^i$ form a simplicial subdivision of $S$.

Take $u\in S,\;u\not= v.$ Then $u\notin B_{\varepsilon_0},$ and hence $u$
must be in some $A^k_{\varepsilon_0}$ and thus $d(u,A^k)\le \varepsilon_0$
and $u$ will be outside of some $A^j_{\varepsilon_0}$, hence $d(u,A^j)>\varepsilon_0.$
Therefore $u$ cannot be  equally spaced to every $A^i$, which shows that
$v$ is the single point with this property.

The same reasoning shows that the equality in the point 4. of the
lemma takes place.

Let be $A^1, A^2,...,A^{n+1}$ and $D^1, D^2,...,D^{n+1}$ be the
$\mathcal H$-families in the point 5. Then we have

$$A^i_{\varepsilon_0}\subset D^i_{\varepsilon_0},\;i \in N$$
with $\varepsilon_0$ defined at 1.
Hence
$$ \cap_{i\in N}A^i_{\varepsilon_0}\subset
\cap_{i\in N}D^i_{\varepsilon_0} \not= \emptyset,$$
and then
$$\varepsilon_1=\inf \{\varepsilon:\,\cap_{i\in N}D^i_\varepsilon \not=\emptyset\}\leq \varepsilon_0.$$

\end{proof}

\begin{lem}\label{n}
If the $n$-simplex $S $ is covered by $n+1$ closed convex sets
$C^1,\,C^2,...,C^{n+1}$ such that $S^i\subset
C^i$ for each $i\in N$, then
putting
$$C^i_t=(1-t)S^i+tC^i,\;t\in [0,1],\;i\in N,$$ and $\mathcal H_t = \{C^i_t:\;i\in N\}$,
we have the following assertions fulfilled:
\begin{enumerate}
\item There exists a $t_0\in (0,1]$ such that

(i) $\{C^i_t:\;i\in N\}$ is an $\mathcal H$-family for $t<t_0$;

(ii) $\{C^i_t:\;i\in N\}$ covers $S$ for $t\geq t_0$.
\item If $\varepsilon_t$ denotes the distance of the equally spaced point $v_t$ from
the members $C^i_t$ of the $\mathcal H$-family $\{C^i_t:\;i\in N\}$,
then $\varepsilon_t$ is
decreasing with $t$.

\item If $\{C^i_t:\;i\in N\}$ is an $\mathcal H$-family, then there exists
a neighborhood $W$ of $t$ such that $\{C^i_{t'}:\;i\in N\}$ is an
$\mathcal H$-family for any $t'\in W$.
\item $\delta_0=\inf \{\varepsilon_t:\,\mathcal H_t \;\textrm{is an}\;\mathcal H-\textrm{family}\}=0$
and there exists a sequence of $\varepsilon_t$-s converging to $0$.

\end{enumerate}

\end{lem}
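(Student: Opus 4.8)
The plan is to establish, as a preliminary, the basic geometry of the interpolating sets $C^i_t=(1-t)S^i+tC^i$. Each is a Minkowski combination of the compact convex sets $S^i$ and $C^i$, hence compact and convex, and since $C^i_0=S^i$ while $C^i_1=C^i$ the family interpolates between the face and the covering set. I would first record two structural facts. \emph{Monotonicity}: because $S^i\subset C^i$ and $C^i$ is convex, for $t<t'$ one rewrites any point $(1-t)x+ty$ (with $x\in S^i$, $y\in C^i$) as $(1-t')x+t'y'$ with $y'=\frac{t}{t'}y+\frac{t'-t}{t'}x\in C^i$, so $C^i_t\subset C^i_{t'}$; in particular $S^i=C^i_0\subset C^i_t$ for every $t$, which already secures the defining inclusion of an $\mathcal H$-family. \emph{Hausdorff--Lipschitz continuity}: since $(1-t)x+ty$ and $(1-t')x+t'y$ differ by $|t-t'|\,|y-x|\le |t-t'|\,L$, where $L$ is the diameter of $S$, the map $t\mapsto C^i_t$ is $L$-Lipschitz in the Hausdorff metric.

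With these in hand I would prove the assertions in the order 3, 1, 2, 4. For assertion 3, suppose $\mathcal H_t$ is an $\mathcal H$-family, so some $x\in S$ lies at distance $\rho>0$ from the closed set $\bigcup_i C^i_t$; by the Lipschitz estimate, for $|t'-t|<\rho/L$ each $C^i_{t'}$ is contained in the $\eta$-hull of $C^i_t$ with $\eta=|t'-t|L<\rho$, so $x$ stays uncovered, and together with $S^i\subset C^i_{t'}$ this exhibits the required neighborhood $W$. For assertion 1, let $T=\{t:\bigcup_i C^i_t=S\}$. Monotonicity makes $T$ an up-set and $1\in T$ by hypothesis, so $t_0:=\inf T$ is well defined; assertion 3 applied at $t=0$ (where $\bigcup_i C^i_0=\bigcup_i S^i$ is the boundary of $S$, which does not cover $S$) yields $t_0>0$. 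To see $t_0\in T$, fix $x\in S$; for every $t>t_0$ covering gives an index with $x\in C^i_t$, so by pigeonhole a single $i$ works along a sequence $t_k\downarrow t_0$, and writing $x=(1-t_k)\xi_k+t_k\eta_k$ with $\xi_k\in S^i$, $\eta_k\in C^i$ and passing to convergent subsequences shows $x\in C^i_{t_0}$. Thus $T=[t_0,1]$, and for $t<t_0$ the family $\mathcal H_t$ fails to cover $S$ yet contains each $S^i$, i.e.\ is an $\mathcal H$-family. Assertion 2 is then immediate: for $t<t'<t_0$ monotonicity gives $C^i_t\subset C^i_{t'}$, so Lemma \ref{kn}(5) forces $\varepsilon_{t'}\le\varepsilon_t$.

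Finally, assertion 4 is where the real content lies. Since $\varepsilon_t$ is decreasing on $[0,t_0)$, the infimum $\delta_0$ equals $\lim_{t\to t_0^-}\varepsilon_t$, so it suffices to bound $\varepsilon_t$ by a quantity tending to $0$. I would use the variational formula of Lemma \ref{kn}(4), namely $\varepsilon_t=\sup_{u\in S}\inf_{i\in N}d(u,C^i_t)$, together with the standard estimate $d(u,C^i_t)\le d(u,C^i_{t_0})+d_H(C^i_t,C^i_{t_0})$. Because $t_0\in T$, every $u\in S$ lies in some $C^i_{t_0}$, so $\inf_i d(u,C^i_{t_0})=0$; combined with the Lipschitz bound $d_H(C^i_t,C^i_{t_0})\le (t_0-t)L$ this gives $\inf_i d(u,C^i_t)\le (t_0-t)L$ uniformly in $u$, whence $\varepsilon_t\le (t_0-t)L\to 0$. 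Therefore $\delta_0=0$, and any sequence $t_k\uparrow t_0$ produces $\varepsilon_{t_k}\to 0$. The main obstacle I anticipate is precisely assertion 4: one must correctly invoke the covering at the threshold $t_0$ (which is exactly why the closedness of $T$ in assertion 1 had to be proved) and transport it into the distance formula through Hausdorff continuity, so the compactness argument for $t_0\in T$ is the technical linchpin that makes the whole scheme close up.
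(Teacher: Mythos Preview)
Your proof is correct and shares the paper's backbone---the monotonicity $C^i_{t}\subset C^i_{t'}$ for $t<t'$ (via the same convex-combination rewriting), the threshold $t_0$, and the appeal to Lemma~\ref{kn}(5) for assertion~2---but it diverges from the paper in two places worth noting. First, you explicitly prove $t_0\in T$ by a pigeonhole/compactness argument, whereas the paper simply asserts assertion~1 ``follows'' and only implicitly recovers the covering at $t_0$ later; your version is more careful here. Second, and more substantively, for assertion~4 the paper argues by contradiction: assuming $\delta_0>0$, it takes $t_m\uparrow t_0$, passes to a limit of the equally spaced points $v_{t_m}\to v$, and shows $d(v,C^i_{t_0})\ge\delta_0$ for all $i$, so that $\mathcal H_{t_0}$ would itself be an $\mathcal H$-family, contradicting the definition of $t_0$. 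You instead give a direct quantitative bound $\varepsilon_t\le(t_0-t)L$ by combining the variational formula of Lemma~\ref{kn}(4) with your Hausdorff--Lipschitz estimate and the covering at $t_0$. Your route is slightly more efficient and yields an explicit rate; the paper's route avoids invoking Lemma~\ref{kn}(4) but needs the extra compactness step on the $v_{t_m}$. Either way the ``technical linchpin'' is exactly what you identified: one must know that $\mathcal H_{t_0}$ covers $S$, and both proofs hinge on that fact.
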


\begin{proof}
Denoting with $\|.\|$ the Euclidean norm in $\mathbb R^n$ we have for $t\in [0,1]$ and
$x=(1-t)s+tc\in C^i_{t},$ ($s\in S^i,\;c\in C^i)$ that
$\|x-s\|=\|(1-t)s+tc-s\|=t\|c-s\|\leq td$ with $d$ the diameter of $S$.

Denote with $b$ the barycenter of $S$ and with $\delta$ the minimal distance
of $b$ from the faces $S^i$. Consider an arbitrary $x\in C^i_t$ represented in the above form.
Then
$$\|x-b\|\geq \|b-s\|-\|s-x\|\geq \delta -td,$$
whereby $\|x-b\|>0$ for $t>0$ sufficiently small. Varying $i$, we can get
a positive $t$ which is of this property for all $i\in N$.
Taking such a $t$, we have $b\notin C^i_t,\;i\in N$. Thus the later sets
form an $\mathcal H$-family.

We have $C^i_{t_1}\subset C^i_{t_2}$ for $t_1<t_2$.

Indeed, for an arbitrary
$(1-t_1)s+t_1c\in C^i_{t_1}$ we have
$$(1-t_1)s+t_1c =(1-t_2)s+t_2(\frac{t_2-t_1}{t_2}s+\frac{t_1}{t_2}c)\in C^i_{t_2}$$
since $S^i\subset C^i$.

Hence if $\{C^i_t:\;i\in N\}$ is an $\mathcal H$-family, then $\{C^i_{t'}:\;i\in N\}$ has
for $t'<t$ the same property.

From the considerations above and the fact that
$\{C^i_1:\;i\in N\}$ covers $S$ follows the existence of a $t_0\in (0,1]$
satisfying the requirements in point 1 of the lemma.

From the fact that we have $C^i_{t_1}\subset C^i_{t_2}$ for $t_1<t_2$
and the assertion 5 in Lemma \ref{kn}, it follows the assertion 2 of the lemma.

Suppose that $\{C^i_t:\;i\in N\}$ is an $\mathcal H$-family. Fix $i$ for
the moment and take $x=(1-t')s+t'c\in C^i_{t'}$ Let be $v_t$ the point in $S$
at the distance $\varepsilon_t$ from the sets $C^i_t$. Then
$$\|x-v_t\|= \|(1-t)s+tc+(t-t')(s-c)-v_t\|\geq \|(1-t)s+tc-v_t\|-|t-t'|\|s-c\|.$$
The obtained relation shows that for $|t-t'|$ sufficiently small the distance
of $v_t$ from all the members of $\{C^i_{t'}:\;i\in N\}$ is positive, which
concludes the proof of the point 3 of the lemma.

To prove the assertion 4 of the lemma, we assume the contrary: $\delta_0>0 $.

Consider the sequence $(t_m)$, $t_m<t_0,\;t_m\to t_0$. For every $m$ we determine
the equally spaced point $v_m=v_{t_m}$ to the sets $C^i_{t_m}$ and its distance
$\varepsilon_m=\varepsilon_{t_m}$ to these sets.
Passing to a subsequence if necessary, we can suppose that $v_m\to v\in S$.
We shall show, that
$$\|x-v\|\geq \delta_0\;\forall\;x\in C^i_{t_0},\;\forall i\in N.$$
Indeed, fixing $i$ for the moment and taking an arbitrary
$x=(1-t_0)s+t_0c\;(s\in S^i,\;c\in C^i)$,  we have
$$\|(1-t_m)s+t_mc-v\|\geq \|(1-t_m)s+t_mc-v_m\|-\|v_m-v\|\geq \varepsilon_m -\|v_m-v\|.$$
By passing to limit with $m$ we conclude
$$\|x-v\|=\|(1-t_0)s+t_0c-v\|\geq \delta_0.$$
Hence we have for $i\in N$ that
$$d(v,C^i_{t_0})\geq \delta_0.$$
The obtained relations show that $\{C^i_{t_0}:\;i\in N\}$
is an $\mathcal H$-family, which contradicts the definition of $t_0$. From the point 2 and the proof
above it follows also that the sequence $(\varepsilon_m)$ decreases to $0$ with
$m\to \infty$.
\end{proof}

{\it Proof Theorem \ref{main}}.

The proof of the first part of the theorem and the unicity of the
equally spaced point when the distance is positive follows from Lemma \ref{kn}.

Suppose that the family $\{A^i:\;\i\in N\}$ covers $S$. Then considering the sets
$$A^i_t=(1-t)S^i+tA^i,\;t\in [0,1]$$
we can determine according to Lemma \ref{n} the maximal number $t_0\in (0,1]$ such
that $A^i_t:\;i\in N$ form an $\mathcal H$-family for $t<t_0$ with
the equally spaced point $v_t$ from its members and the distance $\varepsilon_t$.
Then we can determine a sequence of distances $\varepsilon_{t_m}$ tending to $0$
with $m$. Consider the point $x^i_m\in A^i_{t_m}\subset A^i$ of the
distance $\varepsilon_{t_m}$ from $v_{t_m}$. Fix $j\in N$. Passing if necessary to a subsequence
we can suppose that $x^j_m\to x\in A^j$. Since $\varepsilon_{t_m}\to 0$ it follows that
$x^i_m\to x,\;\forall \;i\in N.$ Hence $x\in \cap_{i\in N}A^i$ and thus
$d(x,A^i)=0,\;i\in N.$ This completes the proof of the theorem.

{\it Proof of Corollary \ref{dist}}

Since every $S^i$ is in some $B^\alpha$ there must exist
a subfamily
$$\{B^{\alpha_1}, B^{\alpha_2},...,B^{\alpha_{n+1}}\}$$
so as to have $S^i\subset B^{\alpha_i}$. Obviously
$\{B^{\alpha_1}, B^{\alpha_2},...,B^{\alpha_{n+1}}\}$
cannot cover $S$, and hence according the firs part of Theorem
\ref{main}, there exists a unique $v\in S$ which is at the distance
$\varepsilon >0$
to every $B^{\alpha_i}$. Taking now an arbitrary other $B^\alpha$,
according the condition of the corollary, $v$ must be at the same
distance $\varepsilon $ to it.

{\it Proof of Corollary \ref{hell}.}

Let $\mathcal B= \{B^\alpha:\;\alpha\in I\}$ be a face covering family
of closed convex sets in $S$. If the cardinality of $I$ is $\leq n,$
then we have nothing to prove since any $k\leq n$ maximal faces
of $S$ have a common point which will be a common point for $\mathcal B$.
Hence we can suppose that $\mathcal B$ contain at least
$n+1$ members.

Denote then in the following with $N$ the set $N = \{1, 2, ..., n+1\}$.
Let now $\mathcal B= \{B^\alpha:\;\alpha\in I\}$ be a face covering family
of closed convex sets in $S$ with cardinality of $I \geq n+1$.
Let us suppose that a subfamily 
$\mathcal B_0=\{B^{\alpha_1},B^{\alpha_2},...,B^{\alpha_{n+1}}\}$
 has a nonempty intersection and that $\mathcal B_0$ covers the boundary of $S$. Consider then 
a point $x \in \bigcap_{i \in N}B^{\alpha_i}$ and $v$ an arbitrary point in the simplex $S$.
The halfline with the origin in the point $x$ and going through the point $v$ must then
intersect the boundary of the simplex $S$ in a point $y$. Because the subfamily $\mathcal B_0$
is covering the boundary of $S$ there is an $i \in N$ such that $y \in B^{\alpha_i}$. We have also
$x \in B^{\alpha_i}$. From the convexity of the set $B^{\alpha_i}$ follows then $v \in B^{\alpha_i}$.
Therefore the simplex $S$ is also covered by the subfamily $\mathcal B_0$. 

Consider an arbitrary subfamily $\mathcal B_0= \{B^{\alpha_1},B^{\alpha_2},...,B^{\alpha_{n+1}}\}$ of
$\mathcal B$. If some $S^i$ is not covered by it, then the vertex $a_i$ which is
element of every $S^j,\;j\not=i$ must be element of each member of the family.

Suppose that the subfamily covers the boundary of $S$. If some $B^{\alpha_i}$ contains
two different maximal faces of $S$, then by convexity it covers $S$ and then
it contains the vertex $a_k$ contained in the intersection of the sets $B^{\alpha_j},\;j\not=i.$

If no $B^{\alpha_i}$ contain two different maximal faces, then we can
consider that $S^i\subset B^{\alpha_i},\;i\in N$, and as soon by hypothesis
the family $\mathcal B_0$ covers $S$, we have according Theorem \ref{main} that
it has a nonempty intersection.

In conclusion, each family of $n+1$ members of $\mathcal B$ has a nonempty
intersection. Hence from the theorem of Helly, the whole $\mathcal B$ has
nonempty intersection.
%\section{Remarks}

\end{document}